\newtheorem{theorem}{Theorem}[section]
\newtheorem{lemma}[theorem]{Lemma}
\newtheorem{corollary}[theorem]{Corollary}
\newtheorem{conjecture}[theorem]{Conjecture}
\newtheorem{proposition}[theorem]{Proposition}
\theoremstyle{definition}
\newtheorem{example}[theorem]{Example}
\newcommand{\arrow}{\rightarrow}
\def\arrows{\rightarrow}
\def\narrows{\nrightarrow}
\newcommand*{\ie}{\text{i.e.}}
\newcommand*{\N}{\mathbb{N}}
\newcommand{\ceil}[1]{\left\lceil #1 \right\rceil}
\newcommand{\floor}[1]{\left\lfloor #1 \right\rfloor}
\newcommand{\ceiln}[1]{\lceil #1 \rceil}
\newcommand{\ceilb}[1]{\big\lceil #1 \big\rceil}
\newcommand{\ceilbg}[1]{\bigg\lceil #1 \bigg\rceil}
\newcommand{\floorbg}[1]{\bigg\lfloor #1 \bigg\rfloor}
\newcommand{\ceilBg}[1]{\Bigg\lceil #1 \Bigg\rceil}
\renewcommand{\leq}{\leqslant}
\renewcommand{\geq}{\geqslant}
\title{Degree conditions for Ramsey goodness of paths}
\author{Lucas Arag\~ao, Jo\~ao Pedro Marciano, Walner Mendon\c{c}a}
\address{IMPA, Estrada Dona Castorina 110, Jardim Bot\^anico, Rio de Janeiro, RJ, Brazil}
\email{\{l.aragao, joao.marciano\}@impa.br}
\address{Instituto de Matem\'atica e Estat\'istica, Universidade de S\~ao Paulo, S\~ao Paulo, Brazil}
\email{walner@ime.usp.br}
\thanks{This study was financed in part by the Coordenação de Aperfeiçoamento de Pessoal de Nível Superior, Brasil (CAPES)}
\begin{document}

\begin{abstract}
    A classical result of Chvátal implies that if $n \geq (r-1)(t-1) +1$, then any  colouring of the edges of $K_n$ in red and blue contains either a monochromatic red $K_r$ or a monochromatic blue $P_t$.
    We study a natural generalization of his result, determining the exact minimum degree condition for a graph $G$ on $n = (r - 1)(t - 1) + 1$ vertices which guarantees that the same Ramsey property holds in $G$. 
    In particular, using a slight generalization of a result of Haxell, we show that $\delta(G) \geq n - \ceil{t/2}$ suffices, and that this bound is best possible.
    We also use a classical result of Bollobás, Erd\H{o}s, and Straus to prove a tight minimum degree condition in the case $r = 3$ for all $n \geq 2t - 1$.
\end{abstract}

\maketitle

\section{Introduction}

We say that a graph $G$ is Ramsey for the pair of graphs $(F, H)$, and write $G \arrows \big(F, H\big)$, if in every red-blue colouring of the edges of $G$, there is either a red monochromatic copy of $F$, or a blue monochromatic copy of $H$. 
When $F=H$, we write simply $G \arrows H$. 
With this notation, the Ramsey number $r(F, H)$ is defined to be the minimum $n$ such that $K_n \arrows \big(F, H\big)$.

For any connected graph $H$ the complete $(r-1)$-partite graph with each part of size $v(H)-1$ implies that
\[r(K_r,H) \geq (v(H)-1)(r-1)+1.\]
A graph $H$ is said to be $r$-good if $r(K_r,H) = (v(H)-1)(r-1)+1$.
Motivated by a result of \citet*{chvatal1977tree}, which implies that trees are $r$-good for any $r \in \N$, the study of Ramsey goodness for general graphs $H$ was initiated by \citet*{burr1983generalizations} in 1983. 
In addition to proving that many sparse graphs are $r$-good, they proposed a number of related problems, several of which remained open for over 25 years.

Since then, Ramsey numbers for dense versus sparse graphs have been extensively studied.
Notably, \citet*{nikiforov2009ramsey} solved all but one of \citeauthor{burr1983generalizations}'s problems.
The main conjecture of \citep{burr1983generalizations}, that all sufficiently large connected graphs with bounded average degree are $r$-good, was disproved by \citet{brandt1996expanding}, but \citet*{allen2013ramsey}, among other very interesting results, proved that, for every $r \in \N$, every connected bounded-degree graph $H$ with bandwidth $o(v(H))$ is $r$-good.
Another well known result is due to \citet{fiz2014ramsey}, who proved that the hypercube is $r$-good for any fixed $r \in \N$ as long as its dimension is sufficiently large.
There are many other recent related results in different directions of this area (see, for example, \citep{pokrovskiy2020ramsey, fox2021ramsey, haslegrave2023ramsey, montgomery2023ramsey}). 
In particular, \citet{pokrovskiy2017ramsey} proved a natural and important generalization of \citeauthor{chvatal1977tree}'s result for paths, determining $r(H,P_n)$ for any $n \geq 4v(H)$, and \citet{balla2018ramsey} extended this generalization to bounded degree trees.
\citet{moreira2019ramsey} studied Ramsey goodness in the random graph $G(n,p)$, proving a sparse random analogue of \citeauthor{chvatal1977tree}’s result for paths, and \citet{araujo2023ramsey} proved a similar theorem for all bounded-degree trees.

Another recent line of work is built on the following question.
For a graph $G$ with at least $r(F, H)$ vertices, what minimum degree condition guarantees that $G \arrows \big(F,H\big)$?
The first person to ask this question in the symmetric case was \citet{Schelp2012}.
More precisely, he asked for which graphs $H$ there exists a constant $0<c<1$ such that, for a graph $G$ with $r(H)$ vertices, if $$\delta(G)\ge c v(G)$$ then $G \arrows H$ (Question 22 in \cite{Schelp2012}). 

This phenomenon of attaining Ramsey properties with only a dense graph, as opposed to a complete graph, has been extensively explored, particularly in the context of paths and cycles.
For instance, \citet{Nikiforov2008}, in one of the pioneering results related to this topic, showed that if $G$ is a graph with $n$ vertices then
\[\delta(G)\geq (1-10^{-7})n \quad \Rightarrow \quad G \arrows C_{\ceil{n/2}}.\]
More generally, they actually proved that, with the above minimum degree condition, there are all cycles up to length $\ceil{n/2}$ in one of the colours\footnote{Note that this result is best possible, even in $K_n$, since one of the colours might be a complete bipartite graph.}.
Observe that the largest monochromatic cycle we can expect to find is of length $\frac{2n}{3}$, since a classical result of \citet{faudree1974all} states that $r(C_{2k}) = 3k-1$ for all $k \in \N$.
The following colouring given by \citet{Schelp2012} shows that there is a graph with $n$ vertices and minimum degree about $\frac{3n}{4}$ which is not Ramsey for $C_{\ceiln{\frac{n}{2}}+1}$.
Let $A_1,A_2,B_1,B_2 \subset [n]$ be a balanced partition of $[n]$, then connect in blue all the pairs in $A_1\cup A_2$ and $B_1\cup B_2$ and connect in red the edges between $A_1$ and $B_1$ and between $A_2$ and $B_2$.
Building on a series of results by various authors \citep{Nikiforov2008, Li2010, GS_2012, benevides2012monochromatic},
\citet*{balogh2022monochromatic} proved that the above construction is tight, \ie, 
$$\delta(G)\geq \frac{3n-1}{4} \quad \Rightarrow \quad G \arrows C_{\frac{2n}{3}}.$$

There are several variations of \citeauthor{Schelp2012}'s question that were introduced very recently.
For instance, \citeauthor{Luczak2019} addressed the three-coloring version for cycles in \cite{Luczak2019} and \cite{Luczak2020} and \citet*{Zhang2023} asymptotically solved the bipartite version for cycles.

We solve an asymmetric version of \citeauthor{Schelp2012}'s question for the pair $(K_r,P_t)$, generalising \citeauthor{chvatal1977tree}'s result for paths and cliques.
More precisely, 
we prove a sharp minimum degree condition that forces a graph $G$ with $r(K_r,P_t)$ vertices to be Ramsey for $(K_r,P_t)$, where $P_t$ is the path with $t$ vertices.
Recall that $r(K_r,P_t) = (r-1)(t-1)+1$.

\begin{theorem}\label{thm:Kr}
    Let $r,t \in \N$, and let $G$ be a graph with $n \geq (r-1)(t-1)+1$ vertices. 
    If 
    \[\delta(G) \geq n - \ceil{\frac{t}{2}}\]
    then $G \arrows \big(K_r,P_t\big)$.
\end{theorem}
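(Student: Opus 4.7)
The plan is to prove the theorem by induction on $r$, splitting the inductive step into two cases based on the minimum blue degree of the colouring. For the base case $r = 2$, one has $n \geq t$ and $\delta(G) \geq n - \ceil{t/2} \geq \floor{t/2}$; in particular $\delta(G) \geq (n-1)/2$, so $G$ is connected. If some edge is red we already have the desired red $K_2$; otherwise every edge of $G$ is blue, so the blue subgraph has minimum degree at least $\floor{t/2}$, and by the classical Erd\H{o}s--Gallai theorem it contains a path on $\min(2\floor{t/2}+1,\,n) \geq t$ vertices, which is a blue $P_t$.

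For the inductive step, fix $r \geq 3$, assume the theorem for $r-1$, take $G$ as in the hypothesis, colour its edges, and suppose for contradiction that there is no blue $P_t$. First consider the case where some vertex $v$ has $d_B(v) \leq \floor{t/2} - 1$. Then
\[
d_R(v) \;\geq\; d_G(v) - d_B(v) \;\geq\; \bigl(n - \ceil{t/2}\bigr) - \bigl(\floor{t/2} - 1\bigr) \;=\; (r-2)(t-1)+1,
\]
so $G' := G[N_R(v)]$ has at least $(r-2)(t-1)+1$ vertices. Since every vertex has at most $\ceil{t/2}-1$ non-neighbours in $G$, the same bound holds inside $G'$, giving $\delta(G') \geq |V(G')| - \ceil{t/2}$. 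The inductive hypothesis applied to $G'$ therefore yields either a blue $P_t$ (contradicting the assumption) or a red $K_{r-1}$, which together with $v$ forms the desired red $K_r$.

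It remains to treat the case $d_B(v) \geq \floor{t/2}$ for every vertex $v$. Then the blue subgraph $B$ has minimum degree at least $\floor{t/2}$ and, within each connected component $C$ of $B$, the Erd\H{o}s--Gallai theorem forces $|C| \leq t-1$ (otherwise $B$ already contains a path on $\min(2\floor{t/2}+1,\,|C|) \geq t$ vertices). Consequently $B$ has at least $\ceil{n/(t-1)} \geq r$ components, each of size in $[\floor{t/2}+1,\,t-1]$, and every edge of $G$ joining two distinct blue components is automatically red (else the endpoints would lie in the same blue component). The plan is now to invoke the announced generalization of Haxell's theorem: the high minimum degree of $G$ together with the size constraints on the blue components should guarantee a \emph{transversal clique}, i.e.\ $r$ vertices lying in $r$ pairwise distinct blue components that are pairwise $G$-adjacent; any such transversal is automatically a red $K_r$, completing the proof.

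The hard part is precisely this last step. A naive greedy choice of representatives fails because the $\ceil{t/2}-1$ non-neighbourhoods of the previously picked vertices accumulate faster than the $\floor{t/2}+1$ vertices available in the smallest blue components, so a global argument is needed. The Haxell-type lemma must exploit the fine interplay between $\Delta(\bar G) \leq \ceil{t/2} - 1$ and the bounded-size partition of $V(G)$ induced by the blue components in order to produce the required transversal.
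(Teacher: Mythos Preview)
Your proposal is correct and follows essentially the same approach as the paper: induction on $r$, a case split on whether some vertex has small blue degree (equivalently, large red degree $\geq n-t+1$), induction applied to the red neighbourhood in that case, and otherwise a decomposition into blue components of size in $[\floor{t/2}+1,\,t-1]$ followed by the generalized Haxell theorem (\Cref{thm:gen-haxel}) applied to the complement graph between parts. The only cosmetic differences are that the paper phrases the base case via Dirac's theorem and packages the component bounds into a separate decomposition lemma (\Cref{lem:decomposition}) rather than invoking the Dirac/Erd\H{o}s--Gallai path bound directly, and it carries out explicitly the verification that $\Delta(\bar G)\cdot 2(|S|-m+r-1) < |A_S|$ which you correctly identify as the remaining computation.
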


Note that the case $r=2$ follows from Dirac's Theorem.
In \Cref{sec:example} we will show that the minimum degree in \Cref{thm:Kr} is tight when $n=r(K_r,P_t)$.
For larger values of $n$, however, we expect that a weaker lower bound on the minimum degree suffices.
In particular, in the case $r=3$ we have the following improved minimum degree condition, which is tight for all $n\geq 2t-1$.

\begin{theorem}\label{thm:k3-more-vertices}
    Let $t \in \N$, let $G$ be a graph with $n$ vertices, and let $k \in \N$ be such that ${2(t-1)k < n \leq 2(t-1)(k+1)}$.
    If
    \[ \delta(G) \ge \floorbg{\frac{n}{2}}+\floor{\frac{1}{k+1} \ceilbg{\frac{n}{2}} }\]
    then $G \arrow \big(K_3,P_t\big)$.
\end{theorem}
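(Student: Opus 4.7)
The plan is a proof by contradiction. Suppose $G$ admits a red--blue edge colouring with no red $K_3$ and no blue $P_t$; write $R$ and $B$ for the red and blue spanning subgraphs, so $R$ is triangle-free and $B$ is $P_t$-free.

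As a first step I would combine Mantel's bound $e(R)\leq\lfloor n^2/4\rfloor$ with the extremal bound of Bollob\'as, Erd\H{o}s, and Straus $e(B)\leq (t-2)n/2$ (the same input underlying $r(K_3,P_t)=2t-1$) and compare both to $e(G)\geq n\delta(G)/2$ to obtain
\[
\frac{n\,\delta(G)}{2}\;\leq\;\frac{n^{2}}{4}+\frac{(t-2)n}{2},
\]
that is, $\delta(G)\leq n/2+t-2$. The hypothesis matches this only at the top of the interval $(2(t-1)k,\,2(t-1)(k+1)]$, so for the bulk of the range one must exploit the fact that both extremal inequalities are forced close to equality.

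Accordingly, the next step is a stability analysis. A stability version of Mantel's theorem yields a bipartition $V(G)=X\sqcup Y$ with $|X|,|Y|$ close to $n/2$ and only a few red edges inside $X$ or inside $Y$. Combined with the stability version of the Bollob\'as--Erd\H{o}s--Straus path bound applied to $B$, this further shows that $B[X]$ and $B[Y]$ are essentially disjoint unions of cliques of size at most $t-1$; fix such approximate partitions $X=X_1\sqcup\cdots\sqcup X_s$ and $Y=Y_1\sqcup\cdots\sqcup Y_r$.

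To finish, I would do a blue-edge count across the parts of $X$. Each $x\in X_i$ has $\geq\delta(G)-|Y|$ neighbours in $X$, of which at most $|X_i|-1\leq t-2$ lie in $X_i$; so $x$ has $\geq\delta(G)-|Y|-(t-2)$ neighbours in $X\setminus X_i$, and, by Mantel stability, almost all of these are blue. Any such blue $X_i$--$X_j$ edge concatenates a Hamilton path of the blue clique $B[X_i]$ with one of $B[X_j]$ to give a blue path on $|X_i|+|X_j|\geq t$ vertices, a contradiction. The precise integer hypothesis $\lfloor n/2\rfloor+\lfloor \lceil n/2\rceil/(k+1)\rfloor$ is exactly what makes the arithmetic of this final step work out.

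The main obstacle will be performing the stability analysis with the exact integer precision required by the hypothesis, rather than only asymptotically. A soft Erd\H{o}s--Simonovits-type stability will not suffice, so I expect the argument to proceed by a hands-on count guided by the extremal construction -- most likely a $(k+1)$-blow-up of the natural Chv\'atal example for $r(K_3,P_t)$, with boundary cases near the endpoints of $(2(t-1)k,\,2(t-1)(k+1)]$ requiring the most delicate treatment.
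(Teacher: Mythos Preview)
Your approach is genuinely different from the paper's, but it carries a real gap rather than merely a technical obstacle, and it misidentifies the Bollob\'as--Erd\H{o}s--Straus result that is actually at work.

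First a correction: the bound $e(B)\leq (t-2)n/2$ for a $P_t$-free graph is the Erd\H{o}s--Gallai theorem, not Bollob\'as--Erd\H{o}s--Straus. The latter, as invoked in the paper, is a minimum-degree threshold for a balanced $m$-partite graph to contain a triangle; it plays a completely different role. More importantly, your stability programme is not just a detail to be filled in later. In the extremal construction (Example~\ref{ex:many-vtx} with $r=3$) the red graph is \emph{exactly} bipartite with parts $X,Y$ of sizes $\lceil n/2\rceil,\lfloor n/2\rfloor$, and $B[X]$, $B[Y]$ are \emph{exactly} disjoint unions of $k+1$ cliques of size $\leq t-1$. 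There is zero stability slack, and your final step fails on this very configuration: every pair inside $X$ lying between distinct $X_i$'s is a \emph{non-edge} of $G$, not a blue edge, so the promised ``blue $X_i$--$X_j$ edge'' for concatenation need not exist. Any argument along your lines must therefore separate the extremal example from single-vertex perturbations of it, and nothing in the outline indicates how; a soft Erd\H{o}s--Simonovits stability cannot do this, and you yourself flag the exact-integer precision as the main obstacle without proposing a mechanism to achieve it.

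The paper's proof bypasses stability entirely. It splits on whether some vertex has red degree $\geq\lceil n/2\rceil$. If so, one shows $\delta\big(G[N_R(u)]\big)\geq\lfloor d_R(u)/(k+1)\rfloor$ and applies Proposition~\ref{prop:min-deg-linear-path} inside the red neighbourhood to force a $P_t$ there, hence a red edge and a red $K_3$. If not, then $\delta(B)\geq\lfloor\tfrac{1}{k+1}\lceil n/2\rceil\rfloor$, and Lemma~\ref{lem:decomposition} (a P\'osa-rotation argument) partitions $V(G)$ into exactly $m=2k+1$ blue-Hamiltonian components, each of size between $\lfloor\tfrac{1}{k+1}\lceil n/2\rceil\rfloor+1$ and $t-1$; all $G$-edges between parts are red. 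The genuine Bollob\'as--Erd\H{o}s--Straus theorem (Theorem~\ref{thm:r-partite-turan}, via the blow-up Corollary~\ref{cor:r-partite-turan}) then locates a triangle in this $(2k+1)$-partite red graph directly from the minimum-degree hypothesis, with no stability step required.
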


In particular, \Cref{thm:k3-more-vertices} gives a second proof of the case $r=3$ of \Cref{thm:Kr}.
In \Cref{sec:example} we will construct, for all $k(r-1)(t-1) < n \leq (k+1)(r-1)(t-1)$, a graph $G$ with $n$ vertices and minimum degree 
\[n-\ceilBg{\frac{k}{k+1}\ceil{\frac{n}{r-1}}}-1\] 
such that ${G\not\arrow (K_r,P_t)}$.
For $r=3$ this construction shows that the lower bound on the minimum degree required in \Cref{thm:k3-more-vertices} is tight.
This construction motivates us to state the following conjecture, which is an extension of \Cref{thm:k3-more-vertices}, replacing the triangle with a arbitrary clique $K_r$. 
The case $r=2$ follows from standard techniques (see \Cref{prop:min-deg-linear-path}) and the case $r=3$ is exactly \Cref{thm:k3-more-vertices}.

\begin{conjecture}\label{conj:main}
    Let $r,t \in \N$ with $r \geq 2$, let $G$ be a graph with $n$ vertices, and let $k \in \N$ be such that ${(r-1)(t-1)k<n\leq (r-1)(t-1)(k+1)}$. 
    If
    \[ \delta(G) \geq n-\ceilBg{\frac{k}{k+1}\ceil{\frac{n}{r-1}}}\]
    then $G \arrow \big(K_r,P_t\big)$.
\end{conjecture}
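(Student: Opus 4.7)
My plan is to attack \Cref{conj:main} by induction on $r$, closely mirroring the roles played by Haxell's theorem in the proof of \Cref{thm:Kr} and by the Bollob\'as--Erd\H{o}s--Straus theorem in the proof of \Cref{thm:k3-more-vertices}. The base case $r = 2$ is provided by \Cref{prop:min-deg-linear-path}, a Dirac-type statement available to us by hypothesis.

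For the inductive step, fix $r \geq 3$ and a graph $G$ satisfying the hypothesis, together with a red/blue colouring of $E(G)$; suppose for contradiction that there is neither a blue $P_t$ nor a red $K_r$. Since the blue subgraph $B$ is $P_t$-free, the Erd\H{o}s--Gallai theorem gives $e(B) \leq (t-2)n/2$, so an averaging argument produces a vertex $v$ with $\deg_B(v) \leq t-2$, and hence $m := |N_R(v)| \geq \delta(G) - (t-2)$. The plan is then to apply the inductive hypothesis with parameters $(r-1, t)$ to $G' := G[N_R(v)]$: a red $K_{r-1}$ in $G'$ extends to a red $K_r$ through $v$, while a blue $P_t$ in $G'$ is already a blue $P_t$ in $G$, so either outcome contradicts our assumption.

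The main obstacle is to verify the hypotheses of the inductive statement on $G'$. Letting $k'$ be the positive integer with $(r-2)(t-1)k' < m \leq (r-2)(t-1)(k'+1)$, the observation that each vertex of $G'$ has at most $\ceil{\frac{k}{k+1}\ceil{\frac{n}{r-1}}}$ non-neighbours in $G$ yields
\[
\delta(G') \geq m - 1 - \ceilBg{\frac{k}{k+1}\ceil{\frac{n}{r-1}}},
\]
so the inductive hypothesis applies to $G'$ provided
\[
\ceilBg{\frac{k'}{k'+1}\ceil{\frac{m}{r-2}}} \geq 1 + \ceilBg{\frac{k}{k+1}\ceil{\frac{n}{r-1}}}.
\]
This inequality is delicate because $k'$ can exceed $k$, so the right-hand side may outrun the left at the boundaries of the intervals $((r-2)(t-1)j,(r-2)(t-1)(j+1)]$. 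A careful case analysis, split according to where $m$ sits within its interval, should settle the arithmetic and is the heart of the argument.

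A secondary obstacle is that the averaging bound $\deg_B(v) \leq t-2$ may not be strong enough when $k$ is large and the blue graph is close to extremal. I would resolve this using the stability of extremal $P_t$-free graphs (a Kopylov / Bollob\'as--Erd\H{o}s--Straus-type statement) to locate $v$ inside a near-$K_{t-1}$ component of $B$, where $\deg_B(v) \leq t-2$ is automatic and $N_R(v)$ is essentially $V(G) \setminus V(K_{t-1})$. A Haxell-style path argument, in the spirit of \Cref{thm:Kr}, should then furnish the required blue $P_t$ when the induction bottoms out at $r-1 = 2$.
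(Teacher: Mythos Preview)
The statement you are addressing is stated in the paper as a \emph{conjecture}, not a theorem: the authors prove only the cases $r=2$ (via \Cref{prop:min-deg-linear-path}) and $r=3$ (\Cref{thm:k3-more-vertices}) and explicitly leave $r\geq 4$ open. There is therefore no ``paper's own proof'' to compare your plan against, and a correct argument here would be a new result.

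Your plan has a concrete gap at the inductive step. Writing $X = \ceilb{\tfrac{k}{k+1}\ceiln{\tfrac{n}{r-1}}}$, the Erd\H{o}s--Gallai averaging gives only $m := |N_R(v)| \geq \delta(G)-(t-2) \geq n - X - (t-2)$, and this can already fall below $(r-2)(t-1)+1$, so that no positive $k'$ with $(r-2)(t-1)k' < m$ exists and the inductive hypothesis is simply unavailable. For instance, with $r=4$, $t=10$, $k=1$, $n=28$ one gets $X=5$, $\delta(G)\geq 23$, and hence only $m\geq 15$, while $(r-2)(t-1)+1=19$. Even in the range where $k'\geq 1$ does exist, the inequality you need, $\ceilb{\tfrac{k'}{k'+1}\ceiln{\tfrac{m}{r-2}}} \geq X$, is asserted to follow from ``a careful case analysis'' but is never carried out; since $m$ can be substantially smaller than $n$ while $k'$ need not exceed $k$, there is no a priori reason for $\tfrac{m}{r-2}$ to compensate for $\tfrac{n}{r-1}$, and the ``secondary obstacle'' you flag (stability when $B$ is near-extremal) is left entirely unspecified. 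Note that even for $r=3$ the paper does \emph{not} proceed by this kind of single-vertex induction: it splits instead on whether some vertex has $d_R(u)\geq \ceiln{n/2}$ and, when none does, passes to the blue decomposition of \Cref{lem:decomposition} and invokes the Bollob\'as--Erd\H{o}s--Straus bound (\Cref{cor:r-partite-turan}). The fact that the authors pose the general $r$ as a conjecture is a strong signal that the inductive arithmetic you are hoping will ``settle'' does not in fact close.
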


\subsection{Overview of the proof}

The first step in the proof of both of our main theorems is the following structural result.
In any colouring of the edges of $G$ with no blue $P_t$ we can find a partition $V(G)=A_1\cup \cdots \cup A_m$ such that for any $i \neq j$ there are no blue edges in $G[A_i,A_j]$ and
\[\floor{t/2}+1 \leq |A_i|\leq t-1.\]

Note that to find a red $K_r$ in $G$, it is enough to  find a clique in $G$ with at most one vertex in each part, since there are no blue edges in between parts.
This is equivalent to finding a transversal independent set of the $m$-partite auxiliary graph given by the nonedges of $G$ going in between the parts.
(A transversal independent set of an $m$-partite graph is an independent set with at most one vertex in each part.)
Conditions for finding transversal independent sets have been widely studied, and can be seen as generalizations of Hall's theorem. 
In particular, Haxell (see Theorem 3 in \citep{haxell2001note}) gave a tight sufficient condition for finding a transversal independent set of size $r$ in an $r$-partite graph.
In \Cref{sec:hax}, we use this result to give a similar condition for finding a transversal independent set of size $r$ in an $m$-partite graph, for any $m \geq r$ (see \Cref{thm:gen-haxel}).
This condition, which relates to the non-existence of small dominating sets in the graph, can be verified for our auxiliary $m$-partite graph.

The proof of \Cref{thm:k3-more-vertices} follows a similar strategy, but the argument described above for finding a red triangle in this $m$-partite graph does not work.
That is, when the number of vertices of the graph is greater than $2t-1$, the weaker lower bound on the minimum degree is not sufficient to verify the hypothesis of \Cref{thm:gen-haxel}, as we do in the proof of \Cref{thm:Kr}. 
To amend this, we replace this part of the argument by a classical theorem of \citet*{bollobas1974complete} (see \Cref{thm:r-partite-turan}),
which provides a sharp minimum degree condition for finding triangles in balanced $m$-partite graphs.
The authors of \citep{bollobas1974complete} actually proved an unbalanced version of this result,
but to avoid some technicalities in stating the unbalanced version, we apply the balanced version of this result to an appropriate blow-up of the $m$-partite graph given by the edges of $G$ between the parts.
The blow-up will make the graph balanced, and it is enough to find a triangle in this modified graph to find a red triangle in $G$.

The paper is organized as follows.
In \Cref{sec:example} we give examples showing that the minimum degree conditions in Theorems \ref{thm:Kr} and \ref{thm:k3-more-vertices} and \Cref{conj:main} are tight, in \Cref{sec:hax} we use Haxell's theorem to obtain a sufficient condition for finding a transversal independent set of size $r$ in an $m$-partite graph, and in Sections \ref{sec:pf-of-kr} and \ref{sec:pf-of-thm2} we give the proofs of \Cref{thm:Kr} and \Cref{thm:k3-more-vertices}, respectively.

\section{Constructions}\label{sec:example}
We present two constructions showing that the minimum degree conditions in Theorems \ref{thm:Kr} and \ref{thm:k3-more-vertices} cannot be improved.
First, we describe a graph $G$ with $n=(r-1)(t-1)+1$ vertices and $$\delta(G)=n - \ceil{\frac{t}{2}}-1$$ with $G \narrows \big(K_r,P_t\big)$, showing that the minimum degree condition in \Cref{thm:Kr} is tight.

\begin{example}\label{ex:extr-Kr}
    The graph $G$ is obtained by removing a copy of the complete bipartite graph with part sizes $\floor{t/2}$ and $\ceil{t/2}$ from the complete graph $K_n$.
    The blue edges form $r$ vertex-disjoint cliques: the two parts of the bipartite graph we removed from $G$, and $r -2$ cliques of size $t - 1$. 
    The remaining edges of $G$ are all coloured red.
    
    More formally, let \(V_1,\ldots,V_r\) be disjoint sets with \(|V_i| = t-1\) for every \(i \in [r-2] \), \(|V_{r-1}| = \lceil t/2 \rceil \) and \(|V_{r}| = \lfloor t/2 \rfloor \).
    Let $G$ be the graph obtained by joining every pair of vertices in \( V = V_1 \cup \cdots \cup V_r\), except for those pairs $uv$ with \(u \in V_{r-1}\) and \(v \in V_{r}\).
    Note that \(n = v(G) = (r-1)(t-1) + 1\) and \[\delta(G) = (r-2)(t-1)+\floor{t/2}-1 = n - \ceil{t/2}-1.\]
    Now, colour every edge inside $V_i$ blue, for \(i \in [r]\), and colour every other edge of $G$ red.
    Each part $V_i$ has size at most $t-1$, so there is no blue copy of $P_t$.
    Moreover, since the graph induced by the red edges is $(r-1)$-partite with parts $V_1, \ldots, V_{r-2}, V_{r-1}\cup V_r$, this colouring also contains no red copy of $K_r$.
\end{example}

We will next show that for every $(r - 1)(t - 1)k < n \leq (r - 1)(t - 1)(k + 1)$, where $r \geq 3$ and $k,t \geq 1$, there exists a graph $G$ with $n$ vertices and
\[\delta(G)=n-\ceilBg{\frac{k}{k+1}\ceil{\frac{n}{r-1}}} -1,\]
such that $G \narrows \big(K_r,P_t\big)$.
This construction motivates the bound on $\delta(G)$ in \Cref{conj:main}.
\begin{example}\label{ex:many-vtx}
    The graph $G$ is obtained by placing $k+1$ cliques of size $\approx n / (r-1)(k+1)$ into each part of the Turan graph $T_{r-1}(n)$.
    The edges of these cliques are coloured blue, and the remaining edges of $G$ are coloured red.
    
    More formally, we will define a graph $G$ with $n$ vertices as follows.
    Let $$V(G)=\bigcup_{i =1}^{r-1} A_{i}$$ be a partition of $V(G)$ into $r-1$ parts of size that differ at most by one.
    Now for each $i \in [r-1]$ define $$A_i=\bigcup_{j = 1}^{k+1}A_{i,j}$$ a partition of $A_i$ into $k+1$ parts of size that differ at most by one.
    Two vertices $v\in A_{i_1,j_1}$ and $w\in A_{i_2,j_2}$ are connected in $G$ if $i_1\neq i_2$ and also if $i_1=i_2$ and $j_1=j_2$.
    The edge $vw$ is coloured red if $i\neq j$, otherwise is coloured blue.
    
    It is easy to see that the minimum degree of $G$ is given by the vertices that are in parts $A_{i,j}$ with $|A_i|=\ceil{\frac{n}{r-1}}$ and $|A_{i,j}|=\floor{\frac{|A_i|}{k+1}}$.
    Thus,
    \[\delta(G)=n-\ceil{\frac{n}{r-1}}+\floor{\frac{1}{k+1}\ceil{\frac{n}{r-1}}}-1 = n- \ceilBg{\frac{k}{k+1}\ceil{\frac{n}{r-1}}} -1.\]
    Since $1\leq i \leq r-1$, the largest red clique has size $r-1$, and the largest blue path has size at most $|A_{i,j}|\leq \ceil{ \frac{1}{k+1}\ceil{\frac{n}{r-1}}} \leq t-1$.
\end{example}
Note that in \Cref{ex:many-vtx} the blue cliques can be as small as $t/2$.

\section{Transversal Independent Sets}\label{sec:hax}

In this section we will use a result of Haxell to prove \Cref{thm:gen-haxel}, below. 
Recall that a transversal independent set in an $m$-partite graph is an independent set with at most one vertex in each part. 
The theorem gives a sufficient condition for an $m$-partite graph to contain a transversal independent set of size $r$. 
A set of vertices $A$ in a graph $G$ is \emph{dominated} by another set of vertices $B$ if $A \subseteq N_G(B)$ (that is, every vertex in $A$ has a neighbour in $B$).

\begin{theorem}\label{thm:gen-haxel}
  Let $m \geq r \geq 1$, let $G$ be a graph, and let $V(G) = V_1 \cup \cdots \cup V_m$ be a partition of the vertex set.
  Suppose that for every $S \subseteq [m]$, the set of vertices
  \[
    V_S = \bigcup_{i \in S} V_i
  \]
  is not dominated by any set of \(2\big(|S|-m+r-1\big)\) vertices of $V_S$.
  Then there exists an independent set $I$ with \(|I| \geq r\), such that \(\big|I \cap V_i\big| \leq 1\) for every \(i \in [m]\).
\end{theorem}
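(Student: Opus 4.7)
My plan is to prove \Cref{thm:gen-haxel} by induction on $m$, taking Haxell's theorem (Theorem~3 of \citep{haxell2001note}) as the base case $m = r$. In that case our hypothesis specialises to ``for every $S \subseteq [r]$, $V_S$ is not dominated by $2(|S|-1)$ vertices in $V_S$'', which is exactly Haxell's condition, and the conclusion is precisely a transversal independent set of size $r$.

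For the inductive step $m > r$, the plan is to find an index $i_0 \in [m]$ such that the restricted partition $(V_i)_{i \in [m] \setminus \{i_0\}}$ satisfies the hypothesis of \Cref{thm:gen-haxel} with parameters $(m-1, r)$; the inductive hypothesis then supplies the desired independent set. The restricted hypothesis requires, for every $S \subseteq [m]\setminus\{i_0\}$, that $V_S$ not be dominated by $2(|S|-m+r)$ vertices in $V_S$, which is two more vertices than what the original hypothesis provides, so the task is to choose $i_0$ whose removal ``gains'' us this extra slack.

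To do so, I will argue by contradiction: if no such $i_0$ exists, then for every $i \in [m]$ there is a witness $(S_i, B_i)$ with $i \notin S_i$, $B_i \subseteq V_{S_i}$ dominating $V_{S_i}$, and $|B_i| \leq 2(|S_i|-m+r)$. A warm-up subcase in which all witnesses are ``degenerate'', meaning $|S_i|=m-r$ (so $B_i=\emptyset$ and $V_{S_i}=\emptyset$), is dispatched by taking $T = \bigcup_i S_i$: since $i \notin S_i$ for each $i$ the union satisfies $|T| \geq m-r+1$, and then $V_T = \emptyset$ contradicts the original hypothesis applied to $T$. In the general case, combining two non-degenerate witnesses $(S_a, B_a)$ and $(S_b, B_b)$ yields the pair $(S_a \cup S_b,\, B_a \cup B_b)$ which dominates $V_{S_a \cup S_b}$ and has size at most $2(|S_a|+|S_b|-2(m-r))$; a direct computation shows this violates the original hypothesis precisely when $|S_a \cap S_b| \leq m-r-1$.

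The main obstacle is to ensure, in the remaining case where every pair $S_a, S_b$ has intersection at least $m-r$, that some collection of witnesses can still be merged into a violation of the original hypothesis. I would approach this either by iterating the union construction with three or more witnesses (using inclusion--exclusion to amplify the contradiction), or by making an extremal choice of indices --- for instance, picking $a \in [m]$ minimising $|S_a|$ and then $b \in [m] \setminus S_a$ --- that forces enough disjointness to close the gap.
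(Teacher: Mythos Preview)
Your base case and the reduction you aim for are correct, but the proof is genuinely incomplete at the point you yourself flag. In the ``remaining case'' where every pair of witnesses satisfies $|S_a\cap S_b|\geq m-r$, neither of the two strategies you sketch is worked out, and it is not clear that either succeeds. Taking the union of many witnesses does not help: with $\ell$ witnesses $S_{j_1},\dots,S_{j_\ell}$ and $S=\bigcup S_{j_i}$, the bound you need is $\sum_i|S_{j_i}|-|S|\leq(\ell-1)(m-r)-1$, whereas the left-hand side equals $\sum_{x\in S}(\mathrm{mult}(x)-1)$ and can be as large as $m(m-2)$ when $\ell=m$; the inequality then fails for all $m\geq 2$ and $r\geq 2$. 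The ``extremal choice'' idea is only a heuristic, and I do not see how minimising $|S_a|$ forces $|S_a\cap S_b|\leq m-r-1$ for some $b$. So as written the argument has a real gap, not just a missing routine verification.

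The paper bypasses this difficulty with a one-shot reduction rather than induction. Set $k=m-r$, form $G'$ by adding $k$ vertex-disjoint copies $T_1,\dots,T_k$ of $K_m$ to $G$, placing one vertex of each $T_j$ in each part $V_i$. One then checks Haxell's condition for $G'$ directly: any set $X\subseteq V'_S$ of size $2(|S|-1)$ that dominates $V'_S$ must spend at least two vertices in each $T_j$ (to dominate that clique), so $|X\cap V_S|\leq 2(|S|-1)-2k=2(|S|-m+r-1)$, contradicting the hypothesis on $G$. A transversal independent set of size $m$ in $G'$ meets each $T_j$ in at most one vertex, hence contains at least $r$ vertices of $G$. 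This is the standard ``deficiency version'' trick (as for Hall's theorem), and it replaces your delicate witness-merging argument with a short computation.
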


A result equivalent to the case $m = r$ of \Cref{thm:gen-haxel} follows from the proof in \citep{haxell1995condition} and is stated explicitly in \citep{haxell2001note}.
For a topological proof of a more general result, see \citep{meshulam2001clique}, and for a simple combinatorial proof, see the survey \citep{haxell2011forming}.
We will deduce the general case from the case $m=r$.
The proof uses a standard technique for deducing such ``defect versions'' of variants of Hall's theorem (see for instance \citep{aharoni2001ryser} for a similar approach), but for the reader's convenience we give the details.

\begin{proof}
    Define $G'$ to be the disjoint union of $G$ with $k$ disjoint copies of $K_m$, which we denote by $T_1, \ldots, T_k$.
    For each $i$, we place exactly one vertex of $T_i$ in each of the parts $V_1, \ldots, V_m$.
    More precisely, let $V(T_i)=\big\{u_1(i), \ldots, u_{m}(i)\big\}$ for each $i \in [k]$, and define, for each $j \in [m]$,
    \[V_j' = V_j\cup\big\{u_j(i) \mid  i \in [k]\big\}.\]
     
    Note that a transversal independent set of size $m$ in $G'$ cannot contain more than one vertex from $V(T_i)$ for each $i \in [k]$, since $T_i$ is clique.
    Therefore, a transversal independent set of size $m$ in $G'$ contains a transversal independent set of size $m-k=r$ in $G$.
    Thus, it is enough to show that the $m$-partite graph $G'$ satisfies the conditions of \Cref{thm:gen-haxel} with $r=m$.
    That is, we need to show, for every $S\subset [m]$, that the set
    \begin{equation}\label{eq:hyp-for-G'}
        V_S':=\bigcup_{i\in S}V_i'
    \end{equation}
    is not dominated by any set of $2(|S|-1)$ vertices of $V_S'$.
    
    Fix $S\subset [m]$, let $X \subset V_S'$ with $|X|=2(|S|-1)$ and assume for a contradiction that $X$ dominates $V_S'$.
    Observe that $X$ must contain at least $2$ vertices from $V(T_i)$ for each $i \in [k]$, since $T_i$ is a connected component of $G'$.
    Therefore,
    \begin{equation}\label{eq:X-size}
        \big|X\cap V_S\big| = |X|- \sum_{i=1}^k\big|X \cap V(T_i)\big| \leq 2(|S|-1) - 2k = 2\big(|S|-m+r-1\big).
    \end{equation}
    Note also that $X \cap V_S$ dominates $V_S$, since a vertex in $X \cap V(T_i)$ has no neighbours in $V_S$.
    Together with \eqref{eq:X-size}, this contradicts our assumption that $V_S$ is not dominated by any set of $2\big(|S| - m + r - 1\big)$ vertices of $V_S$, and this contradiction proves \eqref{eq:hyp-for-G'}. 
    As explained above, we may therefore apply the theorem of Haxell (the case $m = r$ of Theorem 3.1) to find a transversal independent set of size $r$ in $G$, as required.
\end{proof}

\section{Proof of Theorem~\ref{thm:Kr}}\label{sec:pf-of-kr}

The first step in the proof is the following lemma, which gives the decomposition described in the introduction. 

\begin{lemma}\label{lem:decomposition}
	Let $d \in \N$, and let $G$ be a $P_{d}$-free graph with $n$ vertices and $\delta(G) \geq \floor{d/2}$.
	There exists a partition $V(G) = A_1 \cup \cdots \cup A_m$ of the vertex set such that the following hold for every $i \in [m]$:
	\begin{equation}\label{eq:sizeofA}
		\floor{d/2}+1 \leq |A_i| \leq d-1,
	\end{equation}
    $A_i$ is a connected component of $G$, and $G[A_i]$ has a hamiltonian cycle.
\end{lemma}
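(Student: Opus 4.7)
The plan is to let $A_1, \ldots, A_m$ be the vertex sets of the connected components of $G$. This automatically guarantees that each $A_i$ is a connected component of $G$ and that there are no edges between distinct parts, so there is nothing to verify for those two properties. The lower bound $|A_i| \geq \floor{d/2}+1$ follows immediately from the hypothesis, since any vertex $v \in A_i$ has at least $\floor{d/2}$ neighbours, all of which must lie in $A_i$.

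The substantive step is proving the upper bound $|A_i| \leq d-1$. I would assume for contradiction that some component $C$ has $|C| \geq d$ and consider a longest path $P = v_0 v_1 \cdots v_\ell$ in $G[C]$. If $\ell+1 \geq d$, then $P$ already contains a copy of $P_d$, contradicting the $P_d$-freeness of $G$. Otherwise $\ell+1 < d \leq |C|$, so $V(P) \subsetneq C$, and I would run the standard Pósa-style rotation argument: by maximality of $P$, both $N(v_0)$ and $N(v_\ell)$ lie in $V(P)$, hence the index sets
\[
A = \{i \in [\ell] : v_0 v_i \in E(G)\}, \qquad B = \{i \in [\ell] : v_\ell v_{i-1} \in E(G)\}
\]
are subsets of $[\ell]$, each of size at least $\floor{d/2}$. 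A small parity check shows $2\floor{d/2} > \ell$ (using $\ell \leq d-2$), so $A \cap B \neq \emptyset$. Picking $i \in A \cap B$ and concatenating the edges $v_0 v_i$ and $v_\ell v_{i-1}$ with the appropriate segments of $P$ produces a cycle through every vertex of $V(P)$. Since $C$ is connected and $V(P)$ is a proper subset, some vertex $u \in C \setminus V(P)$ has a neighbour on this cycle; breaking the cycle at an edge incident to that neighbour and attaching $u$ yields a path on $\ell+2$ vertices, contradicting the maximality of $P$.

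Once the bound $|A_i| \leq d-1$ is established, the Hamiltonian cycle in each $G[A_i]$ is immediate from Dirac's theorem: we have $\delta(G[A_i]) \geq \floor{d/2} \geq |A_i|/2$, where the last inequality is an easy case split on the parity of $d$ using $|A_i| \leq d-1$. I expect the rotation argument in the upper bound to be the main obstacle, since it requires correctly indexing the endpoint neighbourhoods and identifying the right cycle; the lower bound and the Hamiltonicity are essentially one-line consequences of the minimum degree hypothesis.
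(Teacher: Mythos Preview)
Your proof is correct, but it is organized differently from the paper's. The paper builds the partition greedily: at each step it takes a longest path $P_i$ in the remaining graph, applies P\'osa's lemma to close $P_i$ into a cycle and to conclude that $V(P_i)$ is a full connected component, and then deletes it and repeats. You instead take the connected components directly as the parts, prove the upper bound $|A_i|\leq d-1$ by the same P\'osa rotation-plus-extension contradiction, and then obtain the Hamiltonian cycle in each part for free from Dirac's theorem using $\delta(G[A_i])\geq \lfloor d/2\rfloor \geq |A_i|/2$. Your route is arguably cleaner: it avoids the iterative peeling and the explicit verification that each peeled set is a component, at the cost of invoking Dirac as a black box. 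The paper's route, on the other hand, is more self-contained (it reproves the relevant P\'osa lemma) and makes the Hamiltonicity and the component structure emerge simultaneously from the longest-path argument rather than as separate steps.
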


We note that somewhat similar results were proved by \citeauthor*{allen2013ramsey} \citep[Lemma 19]{allen2013ramsey} and \citeauthor{moreira2019ramsey} \citep[Proposition 3.2]{moreira2019ramsey}, but neither of these results is suitable for our purposes; the former since it requires a stronger minimum degree condition, and the latter since it does not give a partition of all the vertices.
We will deduce \Cref{lem:decomposition} from the following fundamental lemma of Pósa (see \citep{bollobas1998modern}, Chapter IV, Theorem 2). 
For the reader's convenience, we give a short proof of this result.

\begin{lemma}\label{prop:ham-cycle}
    Let $P$ be a path of maximum length in a graph $G$, let $U = V(P)$, and let $u$ and $v$ be the end vertices of $P$.
	If 
    \begin{equation}\label{eq:lb-endvtx}
         d(u)+d(v) \geq |U|
    \end{equation}
	then $G[U]$ has a hamiltonian cycle. 
    Moreover, there are no edges between $U$ and $V(G) \setminus U$.
\end{lemma}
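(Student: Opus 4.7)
The plan is to invoke the classical Ore-type rotation argument, using maximality of $P$ to localise the neighbourhoods of the endpoints, and then a pigeonhole on the indices along $P$ to close up a hamiltonian cycle.

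First, I would observe that maximality of $P$ forces $N(u) \cup N(v) \subseteq U$: if $u$ (say) had a neighbour $w \notin U$, then prepending $w$ to $P$ would yield a path longer than $P$, contradicting maximality. In particular, $d(u)$ and $d(v)$ equal the number of their neighbours inside $U$, and the hypothesis $d(u)+d(v)\geq |U|$ says something about neighbours along the path.

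Next, label the vertices of $P$ in order as $v_1=u, v_2, \ldots, v_k=v$, where $k=|U|$, and introduce the two index sets
\[
A \;=\; \big\{\, i \in \{2,\ldots,k\} \,:\, u v_i \in E(G) \,\big\}, \qquad
B \;=\; \big\{\, i \in \{2,\ldots,k\} \,:\, v_{i-1} v \in E(G) \,\big\}.
\]
By the previous paragraph $|A|=d(u)$ and $|B|=d(v)$, so $|A|+|B|\geq k$, while $A,B \subseteq \{2,\ldots,k\}$, a set of size $k-1$. By pigeonhole there exists $i \in A \cap B$, and for this $i$ the edges $u v_i$ and $v_{i-1} v$ allow us to close the path into the cycle
\[
v_1 \, v_2 \, \cdots \, v_{i-1} \, v_k \, v_{k-1} \, \cdots \, v_i \, v_1,
\]
which is a hamiltonian cycle $C$ of $G[U]$.

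Finally, for the ``moreover'' clause I would argue again by maximality. Suppose there were an edge $xw$ with $x \in U$ and $w \in V(G)\setminus U$. Traversing $C$ starting at $x$ produces a path on $U$ with $x$ as an endpoint and of length $|U|-1$, and appending the edge $xw$ to its end at $x$ gives a path in $G$ on $|U|+1$ vertices, contradicting the maximality of $P$. Hence no such edge exists.

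There is no real obstacle here beyond bookkeeping; the only point to be careful about is that one must apply the maximality hypothesis twice, once at the start to pin down the neighbourhoods of $u$ and $v$ inside $U$ (so that the counts $|A|$, $|B|$ are genuinely $d(u)$ and $d(v)$), and once at the end to rule out edges leaving $U$ after the hamiltonian cycle has been produced.
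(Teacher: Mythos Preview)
Your proposal is correct and follows essentially the same rotation/pigeonhole argument as the paper: both pin the endpoints' neighbourhoods inside $U$ by maximality, then find an index $i$ with $u v_i$ and $v_{i-1} v$ both edges to close a hamiltonian cycle. Your version is marginally cleaner in that the index-set formulation absorbs the case $uv\in E(G)$ automatically, and your ``moreover'' argument is more explicit than the paper's, spelling out how the cycle $C$ yields a hamiltonian path ending at any prescribed $x\in U$.
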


\begin{proof}
    Let $U=\{u_1, \ldots, u_\ell\}$ be the vertex set of the path $P$ with edge set $\{ u_1u_2,\ldots,u_{\ell-1}u_\ell\}$.
	For $i \in \{1,\ell\}$ we have $N_G(u_i) \subset U$, otherwise we could extend our path, contradicting its maximality.
    If $u_1u_\ell \in E(G)$, then this edge together with $P$ forms a hamiltonian cycle in $G[U]$.
    If not, then by \eqref{eq:lb-endvtx} and the pigeonhole principle, there is some $i\in \{2, \ldots,\ell-2\}$ such that $u_{i+1} \in N_G(u_1)$ and $u_i \in N_G(u_{\ell})$.
	Therefore, the sequence of vertices $$u_1u_{i+1}u_{i+2} \ldots u_{\ell}u_iu_{i-1} \ldots u_1$$ induces a hamiltonian cycle in $G[U]$.
    
    Finally, note that if there were an edge between $U$ and $V(G)\setminus U$ then we could find a path longer than $P$ in $G$, contradicting our choice of $P$.
\end{proof}

Now we proceed to the proof of \Cref{lem:decomposition}.
It is proved by greedily removing maximal paths from the graph and showing that we can close each to a cycle of the same length.

\begin{proof}[Proof of \Cref{lem:decomposition}]
	We will describe directly the $i$-th step of the algorithm.
    Let $i \geq 1$ and assume we have $A_1, \ldots, A_{i-1}$ being connected components of $G$, satisfying \eqref{eq:sizeofA} and that $G[A_j]$ has a hamiltonian cycle for each $j \in [i-1]$.
    Moreover, assume that the set $V_i = V(G) \setminus \big( A_1 \cup \cdots \cup A_{i-1} \big)$ of remaining vertices is non-empty.
    
	Let $A_i$ be the vertex set of a path of maximum length $P_i$ in $G_i:=G[V_{i}]$.
    It is enough to verify that $A_i$ is a connected component of $G$ satisfying \eqref{eq:sizeofA} and that $G[A_i]$ has a hamiltonian cycle.
	Since $\delta(G_i) \geq \delta(G) \geq \floor{d/2}$ and by the assumption that $G_i$ is $P_{d}$-free, we have 
	\begin{equation*}
	\floor{d/2}+1 \leq |A_i| \leq d-1.
	\end{equation*}
    Since $\delta(G_i) \geq \floor{d/2} \geq |A_i|/2$, we can apply \Cref{prop:ham-cycle} to the path $P_i$ in the graph $G_i$ to show that there is a hamiltonian cycle in $G_i[A_i]$ and moreover that there are no edges between $A_i$ and $V_i\setminus A_i$, so $A_i$ is a connected component of $G_i$ (and hence of $G$), as required.
\end{proof}

Finally, let's prove \Cref{thm:Kr}.

\begin{proof}[Proof of \Cref{thm:Kr}]
The proof will proceed by induction on $r\geq 2$.
If $r=2$, we have 
\[\delta(G) \geq n-\ceil{t/2} \geq \floor{n/2} \hspace{-2pt},\]
where the second inequality hold since $n \geq t$,
and every edge of $G$ is blue. 
Thus by Dirac's theorem we can find a hamiltonian path in $G$ and, in particular, a blue $P_t$.

Let $r \geq 3$, and assume the theorem holds for $r - 1$. 
Let $n \geq (r - 1)(t -1) + 1$, let $G$ be a graph with $n$ vertices and $\delta(G) \geq n - \ceil{t/2}$, and let $R$ and $B$ be the graphs of red edges and blue edges, respectively, in a $2$-edge-colouring of $E(G)$.
Assume that this colouring contains no blue copy of $P_t$; we will show it contains a red copy of $K_r$.

Suppose first that there exists $u \in V(G)$ with $d_R(u) \geq n-t+1 \geq (r-2)(t-1)+1$.
We claim that, in this case, 
\[\delta\big(G[N_R(u)]\big) \geq d_R(u) - \ceil{t/2}\hspace{-2pt}.\]
Indeed, for every $v \in N_R(u)$ we have that
\begin{align*}
    \big|N_G(v)\cap N_R(u)\big| &\geq \big|N_G(v)\big| - \big|N_G(v) \setminus N_R(u)\big|\\ 
    &\geq (n-\ceil{t/2}) - (n-d_R(u)) \\
    &= d_R(u)-\ceil{t/2}\hspace{-2pt}.
\end{align*}
Therefore, $G[N_R(u)]$ satisfies the induction hypothesis and it follows that 
$$G[N_R(u)] \rightarrow \big(K_{r-1},P_t\big).$$
But $G$ has no blue $P_t$, so there is a red $K_{r-1}$ in $N_R(u)$.
Together with $u$ this forms a red copy of $K_r$ in $G$, as we wanted to find.

We can therefore assume that $d_R(u) \leq n-t$, and hence,
$$d_B(u) \geq  \delta(G) -d_R(u) \geq n- \ceil{t/2}-(n-t) = t- \ceil{t/2} = \floor{t/2}$$ 
for every $u \in V(G).$
Applying \Cref{lem:decomposition} to the graph $B$ with $d=t$ we obtain a partition $$V(G)=V(B) = A_1 \cup \cdots \cup A_m$$  with 
\begin{equation}
	 \floor{t/2}+1 \leq |A_i| \leq t-1
\end{equation}
for each $i \in [m]$, and hence
\begin{equation}
	r \leq \ceil{\frac{n}{t-1}} \leq m,
\end{equation}
since $n \geq (r-1)(t-1) + 1$.
Moreover, all the edges of $G$ between different parts are red.

Observe that any $K_r$ in $G$ with at most one vertex in each part must be a red $K_r$, which motivates the following definition.
Define $H$ to be the $m$-partite graph with parts $A_1, \ldots,A_m$, in which two vertices are connected in $H$ if they are not an edge of $G$.
In this way, it is enough for us to find an independent transversal set of size $r$ in $H$ to yield a contradiction.

In order to do so, we will show that $H$ with the partition $V(H)=A_1\cup \cdots \cup A_m$ satisfies the conditions of \Cref{thm:gen-haxel}.
Let $S \subset [m]$ and note that
\[\Delta(H) \leq n-1-\delta(G) \leq \ceil{t/2}-1.\]
Let $X \subset A_S$ be any set with $|X| \leq 2(|S|-m+r-1)$, and observe that $X$ dominates at most
\[\Delta(H)\cdot |X| \leq 2\big(|S|-m+r-1\big)\big(\ceil{t/2}-1\big)\]
vertices.
Now, observe that 
\begin{align}
    2\big(|S|-m+r-1\big)\big(\ceil{t/2}-1\big) &\leq \big(r-|S^c|-1\big)(t-1) \label{align:1} \\  
    &\leq n - |S^c|(t-1)-1 \label{align:2}\\
    &< |A_S| \label{align:3},
\end{align}
where \eqref{align:1} holds because $|S|+|S^c|=m$ and $2\ceil{t/2} \leq t+1$, \eqref{align:2} is just our assumption that $n \geq (r-1)(t-1)+1$, and \eqref{align:3} follows because $|A_i| \leq t-1$ for every $i \in S^c$.
It follows that $X$ cannot dominate $A_S$, and therefore, by \Cref{thm:gen-haxel}, there exists an independent transversal set of size $r$ in $H$. 
By the observations above, this is exactly a red copy of $K_r$ in $G$, as required.
\end{proof}

\section{Proof of Theorem~\ref{thm:k3-more-vertices}}\label{sec:pf-of-thm2}

In this section we will prove \Cref{thm:k3-more-vertices}; the proof uses some of the same ideas as that of \Cref{thm:Kr}, but instead of using \Cref{thm:gen-haxel}, we will use a different idea to find a triangle in some blow-up of the red graph.

We will need the following straightforward (and presumably well-known) proposition, which gives a tight lower bound condition on the minimum degree of a graph with $n$ vertices to contain a path with at least $\ceil{n/k}$ vertices. 
It is a simpler variant of an old result of \citet{alon1986longest} (see also \citep{egawa1989longest}), which says that every graph with minimum degree $\ceil{n/(k+1)}$ contains a cycle of length $\floor{n/k}$, but since it follows easily from \Cref{lem:decomposition}, we give the proof for completeness.

\begin{proposition}\label{prop:min-deg-linear-path}
    Let $n,k \in \N$ and let $G$ be a graph with $n$ vertices. If $G$ has minimum degree at least $\floor{\frac{n}{k+1}}$ then it contains a path with $\ceil{\frac{n}{k}}$ vertices.
\end{proposition}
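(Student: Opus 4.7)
The plan is to argue by contradiction, in the same spirit as the proof of \Cref{thm:Kr}. Assume $G$ contains no path on $d := \ceil{n/k}$ vertices. The goal is to apply \Cref{lem:decomposition} with this value of $d$ and derive a contradiction from the resulting structure.

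First I would check that the minimum degree condition $\delta(G) \geq \floor{n/(k+1)}$ implies the hypothesis $\delta(G) \geq \floor{d/2}$ of \Cref{lem:decomposition}. Setting $M := \floor{n/(k+1)}$, so that $n \leq (M+1)(k+1) - 1 = Mk + M + k$, we obtain
\[
\ceil{n/k} \;\leq\; M + 1 + \ceil{M/k} \;\leq\; 2M + 1,
\]
using $\ceil{M/k} \leq M$ for $k \geq 1$. This yields $\floor{d/2} \leq M \leq \delta(G)$, as required.

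Next, I would apply \Cref{lem:decomposition} to produce a partition $V(G) = A_1 \cup \cdots \cup A_m$ into connected components of $G$, each of size at most $d - 1$. Since $k(d-1) = k(\ceil{n/k} - 1) < n$, the number of parts must satisfy $m \geq k+1$. By averaging, some part $A_j$ has $|A_j| \leq \floor{n/(k+1)}$. But $A_j$ is a connected component of $G$, so every vertex of $A_j$ has all of its neighbours inside $A_j$, and therefore has degree at most $|A_j| - 1 \leq \floor{n/(k+1)} - 1$, contradicting our hypothesis on $\delta(G)$.

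The only potentially delicate points are the two small arithmetic inequalities (checking $\floor{n/(k+1)} \geq \floor{d/2}$ to invoke \Cref{lem:decomposition}, and the bound $k(\ceil{n/k} - 1) < n$ used to force $m \geq k+1$); both are elementary and I do not expect a substantial obstacle.
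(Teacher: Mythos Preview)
Your proof is correct and follows essentially the same strategy as the paper's: assume $G$ is $P_{\ceil{n/k}}$-free, apply \Cref{lem:decomposition}, and extract a contradiction from the resulting partition. The only cosmetic differences are that the paper takes $d = 2\floor{n/(k+1)}+1$ (rather than your $d = \ceil{n/k}$) and closes by squeezing $m$ strictly between $k$ and $k+1$ via the size bounds $\floor{n/(k+1)}+1 \leq |A_i| \leq \ceil{n/k}-1$, whereas you average to locate a small part and contradict the minimum degree using the connected-component conclusion of the lemma.
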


\begin{proof}
    Assume by contradiction that $G$ is $P_{\ceil{n/k}}$-free. 
    We will apply \Cref{lem:decomposition} to the graph $G$ with $d=2\floor{n/(k+1)}+1$.
    The reader can easily verify that 
    \begin{equation}\label{eq:d-ineq}
        \delta(G) \geq \floor{\frac{d}{2}} = \floor{\frac{n}{k+1}} \qquad \text{ and } \qquad  d \geq \ceilbg{\frac{n}{k}} \hspace{-2pt}.
    \end{equation}
    Since $G$ is $P_{\ceil{n/k}}$-free, it follows that $G$ is also $P_d$-free.
    Thus, by \Cref{lem:decomposition}, we obtain a partition $A_1, \ldots, A_m$ of the vertices of $G$ with 
    \[\floor{\frac{n}{k+1}} +1 \leq |A_i| \leq \ceilbg{\frac{n}{k}} -1\]
    for each $i \in [m]$, where the upper bound holds since $G[A_i]$ is hamiltonian and $P_{\ceil{n/k}}$-free.
    Then, using $\ceil{x}-1<x<\floor{x}+1$,
    \[k = \frac{n}{\frac{n}{k}}<\frac{n}{\ceil{\frac{n}{k}} -1} \leq m \leq \frac{n}{\floor{\frac{n}{k+1}} +1} < \frac{n}{\frac{n}{k+1}} = k+1,\]
    yielding the desired contradiction since $m$ must be an integer.
\end{proof}

For the proof of \Cref{thm:k3-more-vertices}, applying \Cref{thm:gen-haxel} does not work.
The reason for that is because, in this case, the condition about the non-existence of small dominating sets is too strong.
Instead, we use the following theorem of \citet*{bollobas1974complete}.
\begin{theorem}\label{thm:r-partite-turan}
    Let $H$ be a balanced $m$-partite graph where each part has size $N$.
    If 
    \[\delta(H) > \floorbg{\frac{m}{2}}N\]
    then $H$ contains a triangle.
\end{theorem}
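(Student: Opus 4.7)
The plan is a proof by contradiction: assume $H$ is triangle-free with $\delta(H) > \floor{m/2}N$. The starting observation, which I would use throughout, is that for any edge $uv \in E(H)$ triangle-freeness forces $N(u) \cap N(v) = \emptyset$, so
\[
d(u) + d(v) = |N(u) \cup N(v)| \leq v(H) = mN.
\]
When $m$ is even, this is exactly $2\floor{m/2}N$, so applied to any edge it yields $\min\{d(u),d(v)\} \leq \floor{m/2}N$, directly contradicting the hypothesis. This settles the even case cleanly.

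For odd $m$ the inequality above is loose by $N/2$, so a finer argument is needed. My plan would be to combine the degree-sum lower bound $|E(H)| \geq \tfrac{1}{2}mN(\floor{m/2}N+1)$ with the bound $|E(H)| \leq \floor{m^2/4}N^2$ for triangle-free balanced $m$-partite graphs (whose extremum is the complete bipartite graph grouping the parts into blocks of $\floor{m/2}$ and $\ceil{m/2}$); these two inequalities are incompatible for small $N$. For the remaining cases with $m \geq 5$, the density $\delta/v(H) > (m-1)/(2m) \geq 2/5$ exceeds the Andr\'asfai--Erd\H{o}s--S\'os threshold, so $H$ must be bipartite; writing the resulting bipartition as $V(H) = L \sqcup R$ (into which each $V_i$ embeds entirely, since each part is independent) the minimum-degree condition forces both $|L|/N$ and $|R|/N$ to exceed $\floor{m/2}$, contradicting $|L|/N + |R|/N = m = 2\floor{m/2}+1$.

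The main obstacle is the residual case $m = 3$, which lies outside the range of Andr\'asfai--Erd\H{o}s--S\'os. Here I would push the edge-counting further, exploiting the equality case of the triangle-free bound $|E(H)| \leq 2N^2$: the extremal configurations are essentially bipartite structures of the form $V_i$ versus $V_j \cup V_k$, and a careful inspection shows that each such structure contains a vertex of degree at most $N$, contradicting $\delta > N$. Since \Cref{thm:r-partite-turan} is the classical theorem of Bollob\'as, Erd\H{o}s, and Straus and enters our proof of \Cref{thm:k3-more-vertices} only as a black box, in the final paper it would be natural simply to cite their original argument rather than reproduce this somewhat delicate case analysis.
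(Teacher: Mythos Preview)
The paper does not prove this theorem at all: it is quoted as the classical result of Bollob\'as, Erd\H{o}s, and Straus and used as a black box, exactly as you yourself recommend in your closing sentence. So your final suggestion coincides with what the paper does.

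That said, two points in your attempted argument deserve comment. First, in the odd $m\geq 5$ case your justification that ``each $V_i$ embeds entirely'' into one side of the bipartition ``since each part is independent'' is not valid: an independent set in a bipartite graph can meet both sides (take $\{1,4\}$ in $C_6$, for instance). The conclusion is nonetheless correct, but it needs the minimum-degree hypothesis, not independence: if some $V_i$ met both $L$ and $R$, then picking $v\in V_i\cap L$ and $w\in V_i\cap R$ gives $d(v)\leq |R\setminus V_i|$ and $d(w)\leq |L\setminus V_i|$, hence $d(v)+d(w)\leq (m-1)N$, contradicting $\delta(H)>\lfloor m/2\rfloor N=(m-1)N/2$. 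With this fix, the Andr\'asfai--Erd\H{o}s--S\'os route for $m\geq 5$ goes through.

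Second, the case $m=3$ is a genuine gap. Your edge-count comparison $\tfrac{3N(N+1)}{2}>2N^2$ only holds for $N\leq 2$, and ``a careful inspection'' of extremal configurations for larger $N$ is not a proof; moreover, stability of the bound $|E(H)|\leq 2N^2$ for tripartite triangle-free graphs is itself essentially the content of the Bollob\'as--Erd\H{o}s--Straus theorem you are trying to establish. Since the paper simply cites the original source, and you reach the same conclusion, there is no harm done --- but the sketch as written does not constitute an independent proof.
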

The authors of \citep{bollobas1974complete} actually prove an unbalanced version of this theorem.
To avoid using this more technical statement, we will use a simple blow-up argument in order to apply the balanced case directly.
We do this in the following simple corollary, which is tailored for our application.

\begin{corollary}\label{cor:r-partite-turan}
    Let $k \in \N$, and let $H$ be an $m$-partite graph with parts $A_1, \ldots, A_m$ where $m=2k+1$.
    If for each $i,j \in [m]$ and $u \in A_i$ we have
    \begin{equation}\label{eq:cond-of-cor}
        \big|N_H(u)^c \setminus A_i\big| < k|A_j|,
    \end{equation}
    then $H$ contains a triangle.
\end{corollary}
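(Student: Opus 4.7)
The plan is to apply \Cref{thm:r-partite-turan} to a balanced blow-up of $H$. Let $N$ be a common multiple of $|A_1|,\ldots,|A_m|$ (for instance $N = \lcm(|A_1|,\ldots,|A_m|)$), and define $H'$ as the $m$-partite graph with parts $A_1',\ldots,A_m'$ of common size $N$ obtained by replacing each vertex $u \in A_i$ with $N/|A_i|$ copies and joining two copies in $H'$ precisely when their originals are adjacent in $H$. Since the copies of any single vertex form an independent set in $H'$, every triangle in $H'$ uses three pairwise distinct originals that form a triangle in $H$. Hence it suffices to produce a triangle in $H'$.

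To verify the hypothesis of \Cref{thm:r-partite-turan}, I would fix a copy $u' \in A_i'$ of $u \in A_i$ and count its non-neighbours outside its own part. These are exactly the copies across the other parts of the non-neighbours of $u$ in $H$, so
\[
\bigl|V(H') \setminus \bigl(A_i' \cup N_{H'}(u')\bigr)\bigr| \;=\; \sum_{j \neq i} \frac{N}{|A_j|} \cdot \bigl|A_j \setminus N_H(u)\bigr|.
\]
Let $j^\ast \in [m]$ minimise $|A_j|$, so that $N/|A_j| \leq N/|A_{j^\ast}|$ for every $j$. Factoring out this maximum and applying the hypothesis with $j = j^\ast$ gives
\[
\sum_{j \neq i} \frac{N}{|A_j|} \cdot \bigl|A_j \setminus N_H(u)\bigr| \;\leq\; \frac{N}{|A_{j^\ast}|} \cdot \bigl|\stcomp{N_H(u)} \setminus A_i\bigr| \;<\; \frac{N}{|A_{j^\ast}|} \cdot k\hspace{1pt}|A_{j^\ast}| \;=\; kN.
\]
Since $|V(H') \setminus A_i'| = (m-1)N = 2kN$, this yields $d_{H'}(u') > 2kN - kN = kN = \floor{m/2}\,N$, so \Cref{thm:r-partite-turan} applies and produces a triangle in $H'$, which in turn gives a triangle in $H$.

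The construction is essentially forced, so I do not expect a serious obstacle. The one point worth highlighting is that the hypothesis grants the bound $k|A_j|$ for \emph{every} $j$, and the proof uses this flexibility by choosing $j = j^\ast$ minimising $|A_{j^\ast}|$; this is precisely what cancels the $N/|A_{j^\ast}|$ weight that the blow-up introduces and delivers the clean uniform bound $kN$ on the non-degree, matching the threshold in \Cref{thm:r-partite-turan}.
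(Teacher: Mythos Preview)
Your proof is correct and follows essentially the same approach as the paper: both construct a balanced blow-up of $H$ (the paper uses $N=\prod_i |A_i|$ rather than the lcm, but this is immaterial), bound each copy's non-degree outside its own part by $\frac{N}{\min_j|A_j|}\cdot|N_H(u)^c\setminus A_i| < kN$ using the hypothesis at the smallest part, and then invoke \Cref{thm:r-partite-turan}. Your explicit sum $\sum_{j\neq i}\frac{N}{|A_j|}|A_j\setminus N_H(u)|$ makes the estimate slightly more transparent, but the argument is the same.
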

\begin{proof}
    We would like to apply \Cref{thm:r-partite-turan} to $H$, but since $H$ is not necessarily balanced, we first need to construct from $H$ an auxiliary graph $H'$ with balanced parts.
    Consider a blow-up of $H$ into a balanced $m$-partite graph $H'$ with parts $A_1',A_2', \ldots,A_m'$ obtained in the following way: for each $i \in [m]$, we replace each vertex $u \in A_i$ by a set $F(u) \subset A_i'$ of $\frac{N}{|A_i|}$ vertices, where $N:=\prod_{i=1}^{m}|A_i|$.
    More precisely, for each $i \in [m]$ \[A_i'= \bigcup_{u \in A_i}F(u)\]
    and 
    \[E(H') = \big\{ xy : x \in F(u) \text{ and } y \in F(v) \text{ for some } uv \in E(H) \big\}.\]
    
    Observe that every triangle in $H'$ corresponds to a triangle in $H$ and therefore, by \Cref{thm:r-partite-turan}, to finish the proof it will suffice to show that
    \[\delta(H') > \floorbg{\frac{m}{2}}N = kN.\]
    Let $M= \min_{j \in [n]} |A_j|$ and observe that $|F(u)| \leq N/M$ for every $u \in V(H)$, and therefore for every $i \in [m]$, $u \in A_i$ and $u_0 \in F(u)$, we have
    \[\big|N_{H'}(u_0)^c\setminus A_i'\big| \leq \frac{N}{M} \big|N_H(u)^c\setminus A_i\big| < kN,\]
    where the second inequality holds from the hypothesis \eqref{eq:cond-of-cor}.
    Therefore, since $|A_i'|=N$,
    \[\delta(H') > mN - kN - |A_i'| = kN,\]
    as we wanted to prove.
\end{proof}

Finally, let's proceed to the proof of \Cref{thm:k3-more-vertices}.

\begin{proof}[Proof of \Cref{thm:k3-more-vertices}]
    Let $n = 2(t -1)k + s$ vertices, where $1\leq s \leq 2(t-1)$, let $G$ be a graph with $n$ vertices and 
    \[\delta(G) \geq \floorbg{\frac{n}{2}}+\floor{\frac{1}{k+1} \ceilbg{\frac{n}{2}}}\hspace{-2pt},\] and let $R$ and $B$ be the graphs of red edges and blue edges, respectively, in a $2$-edge-colouring of $E(G)$.
    Assume that this colouring contains no blue copy of $P_t$; we will show that it contains a red copy of $K_3$.
      
    Suppose first that there exists $u \in V(G)$  with $d_R(u) \geq \ceil{\frac{n}{2}}$, we claim that 
    \begin{equation}\label{eq:min-deg-neighbourhood}
        \delta\big(G[N_R(u)]\big) \geq \floor{\frac{d_R(u)}{k+1}}\hspace{-2pt}.
    \end{equation}
    Indeed, for every $v \in N_R(u)$ we have
    \begin{align*}
        \big|N_G(v) \cap N_R(u)\big| &\geq \big|N_G(v)\big| - \big|N_G(v) \setminus N_R(u)\big| \\
        &\geq  \floorbg{\frac{n}{2}} + \floor{\frac{1}{k+1} \ceilbg{\frac{n}{2}}} - (n-d_R(u))\\
        &= d_R(u) - \ceilbg{\frac{n}{2}} + \floor{\frac{1}{k+1} \ceilbg{\frac{n}{2}}}  \\
        & \geq \floor{\frac{d_R(u)}{k+1}}\hspace{-2pt},
    \end{align*}
    as claimed, where the last inequality uses that the function $\phi(m) = m-\floor{\frac{m}{k+1}}$ is increasing on the positive integers, so that $\phi(d_R(u)) \geq \phi(\ceil{n/2})$, since $d_R(u) \geq \ceil{n/2}$.
    
    Now, by equation \eqref{eq:min-deg-neighbourhood}, we can apply \Cref{prop:min-deg-linear-path} in $G[N_R(u)]$ to obtain a path with at least $\ceilb{\frac{d_R(u)}{k}}$ vertices in $G[N_R(u)]$.
    Using again that $d_R(u) \geq \ceil{n/2}$, $n=2k(t-1)+s$ and $s \geq 1$, we have that the number of vertices of this path is at least
    \[\ceil{\frac{d_R(u)}{k}} \geq \ceilbg{\frac{n}{2k}} = \ceilbg{(t-1)+\frac{s}{2k}} \geq t,\]
    which implies there is a red edge in $G[N_R(u)]$, since we have no blue $P_t$.
    This red edge in $G[N_R(u)]$ together with $u$ forms the desired red copy of $K_3$.
    
    Therefore, we can assume that $\Delta(R) \leq \ceil{\frac{n}{2}}-1$, and hence, 
    \begin{align*}
        \delta(B) \geq  \delta(G)-\Delta(R) \geq & \floorbg{\frac{n}{2}}+\floor{\frac{1}{k+1} \ceilbg{\frac{n}{2}}}-\left(\ceilbg{\frac{n}{2}} - 1\right) \\
        \geq &\floor{\frac{1}{k+1} \ceilbg{\frac{n}{2}}} \hspace{-2pt}, 
    \end{align*}
    since $\lfloor n/2 \rfloor + 1 \geq \lceil n/2 \rceil$.
    We will apply \Cref{lem:decomposition} to the graph $B$ with
    \[d=2\floor{\frac{1}{k+1}\ceilbg{\frac{n}{2}}}+1.\]
    We may do so since  
    \begin{equation}\label{eq:d-over-2}
          \delta(B)\geq \floor{\frac{1}{k+1}\ceilbg{\frac{n}{2}}} = \floor{\frac{d}{2}},
    \end{equation}
    and since $B$ is $P_d$-free, which follows since $n > 2(t-1)k$, and therefore
    \begin{equation*}\label{eq:claim}
        d \geq 2 \floor{ \frac{(t-1)k + 1}{k+1}} + 1 \geq 2 \floor{ \frac{t}{2}}+ 1 \geq t.
    \end{equation*}
    
    Hence, by \Cref{lem:decomposition}, there exists a partition $V(G) = A_1 \cup \cdots \cup A_m$ such that every edge between different parts is red and
    \begin{equation}\label{eq:size-A}
    \floor{\frac{d}{2}}+1 \leq |A_i| \leq t-1
    \end{equation}
    for each $i \in [m]$, where the upper bound holds since $G[A_i]$ is hamiltonian and $P_t$-free.
    
    Define an auxiliary graph $H=R[A_1, \ldots, A_m]$ to be the $m$-partite graph of the red edges between distinct parts in this partition $V(G)=A_1\cup\cdots\cup A_m$.
    Our goal now is to apply \Cref{cor:r-partite-turan} to $H$, which will give us the desired red copy of a triangle.
    Let's verify first that, in fact, $m=2k+1$.
    To do so, observe that, by \eqref{eq:size-A} and our choice of $d$,
    \begin{equation}\label{eq:ineq-sizeofA}
        \frac{n}{2(k+1)}\leq  \frac{1}{k+1}\ceilbg{\frac{n}{2}} <\floor{\frac{d}{2}}+1 \leq |A_i| \leq  t-1 < \frac{n}{2k}
    \end{equation}
    for each $i \in [m]$, where the final inequality holds since $n>2(t-1)k$.
    We therefore have
    $$2k < m < 2(k+1) \quad \Rightarrow \quad  m=2k+1,$$ since $m$ is an integer.

    Now it is enough to verify the bound in \eqref{eq:cond-of-cor} for $H$.
    Let $M= \min_{j \in [m]} |A_j|$ and observe that, by \eqref{eq:ineq-sizeofA}, we have
    \begin{equation}\label{eq:size-of-M}
        M \geq \floor{\frac{d}{2}}+1>\frac{1}{k+1}\ceilbg{\frac{n}{2}}.
    \end{equation}
    Hence, for every $i \in [m]$ and $u \in A_i$ we have
    \begin{align*}
        \big|N_H(u)^c\setminus A_i\big| &\leq (n-|A_i|)-(\delta(G)-|A_i|+1) \\
        &\leq n - \left(\floorbg{\frac{n}{2}}+ \floor{\frac{1}{k+1}\ceilbg{\frac{n}{2}}}+1\right) \\ 
        &< \frac{k}{k+1} \ceilbg{\frac{n}{2}} \leq kM,
    \end{align*}
    as we wanted to prove, where the last inequality follows from \eqref{eq:size-of-M}.
\end{proof}

\section*{Acknowledgements}
We would like to thank Rob Morris for many helpful conversations and comments on the writting of this paper.

\bibliographystyle{plainnat}
\bibliography{main}

\begin{thebibliography}{33}
\providecommand{\natexlab}[1]{#1}
\providecommand{\url}[1]{\texttt{#1}}
\expandafter\ifx\csname urlstyle\endcsname\relax
  \providecommand{\doi}[1]{doi: #1}\else
  \providecommand{\doi}{doi: \begingroup \urlstyle{rm}\Url}\fi

\bibitem[Aharoni(2001)]{aharoni2001ryser}
R.~Aharoni.
\newblock Ryser's conjecture for tripartite 3-graphs.
\newblock \emph{Combinatorica}, 21:\penalty0 1--4, 2001.

\bibitem[Allen et~al.(2013)Allen, Brightwell, and Skokan]{allen2013ramsey}
P.~Allen, G.~Brightwell, and J.~Skokan.
\newblock Ramsey-goodness—and otherwise.
\newblock \emph{Combinatorica}, 33:\penalty0 125--160, 2013.

\bibitem[Alon(1986)]{alon1986longest}
N.~Alon.
\newblock The longest cycle of a graph with a large minimal degree.
\newblock \emph{J. Graph Theory}, 10:\penalty0 123--127, 1986.

\bibitem[Ara{\'u}jo et~al.(2023)Ara{\'u}jo, Moreira, and
  Pavez-Sign{\'e}]{araujo2023ramsey}
P.~Ara{\'u}jo, L.~Moreira, and M.~Pavez-Sign{\'e}.
\newblock Ramsey goodness of trees in random graphs.
\newblock \emph{Random Structures Algorithms}, 62:\penalty0 761--790, 2023.

\bibitem[Balla et~al.(2018)Balla, Pokrovskiy, and Sudakov]{balla2018ramsey}
I.~Balla, A.~Pokrovskiy, and B.~Sudakov.
\newblock Ramsey goodness of bounded degree trees.
\newblock \emph{Combin., Probab. Computing}, 27:\penalty0 289--309, 2018.

\bibitem[Balogh et~al.(2022)Balogh, Kostochka, Lavrov, and
  Liu]{balogh2022monochromatic}
J.~Balogh, A.~Kostochka, M.~Lavrov, and X.~Liu.
\newblock Monochromatic paths and cycles in 2-edge-coloured graphs with large
  minimum degree.
\newblock \emph{Combin., Probab. Computing}, 31:\penalty0 109--122, 2022.

\bibitem[Benevides et~al.(2012)Benevides, {\L}uczak, Scott, Skokan, and
  White]{benevides2012monochromatic}
F.S. Benevides, T.~{\L}uczak, A.~Scott, J.~Skokan, and M.~White.
\newblock Monochromatic cycles in 2-coloured graphs.
\newblock \emph{Combin., Probab. Computing}, 21:\penalty0 57--87, 2012.

\bibitem[Bollob{\'a}s(1998)]{bollobas1998modern}
B.~Bollob{\'a}s.
\newblock \emph{Modern graph theory}, volume 184.
\newblock Springer Science \& Business Media, 1998.

\bibitem[Bollob{\'a}s et~al.(1974)Bollob{\'a}s, Erd{\H{o}}s, and
  Straus]{bollobas1974complete}
B.~Bollob{\'a}s, P.~Erd{\H{o}}s, and E.G. Straus.
\newblock Complete subgraphs of chromatic graphs and hypergraphs.
\newblock \emph{Utilitas Math.}, 6:\penalty0 343--347, 1974.

\bibitem[Brandt(1996)]{brandt1996expanding}
S.~Brandt.
\newblock \emph{Expanding graphs and {R}amsey numbers}.
\newblock Freie Univ., Fachbereich Mathematik und Informatik, 1996.

\bibitem[Burr and Erd{\H{o}}s(1983)]{burr1983generalizations}
S.A. Burr and P.~Erd{\H{o}}s.
\newblock Generalizations of a {R}amsey-theoretic result of {C}hv{\'a}tal.
\newblock \emph{J. Graph Theory}, 7:\penalty0 39--51, 1983.

\bibitem[Chv{\'a}tal(1977)]{chvatal1977tree}
V.~Chv{\'a}tal.
\newblock Tree-complete graph {R}amsey numbers.
\newblock \emph{J. Graph Theory}, 1:\penalty0 93--93, 1977.

\bibitem[Egawa and Miyamoto(1989)]{egawa1989longest}
Y.~Egawa and T.~Miyamoto.
\newblock The longest cycles in a graph {$G$} with minimum degree at least
  {$|G| / k$}.
\newblock \emph{J. Combin. Theory, Series B}, 46:\penalty0 356--362, 1989.

\bibitem[Faudree and Schelp(1974)]{faudree1974all}
R.~Faudree and R.~Schelp.
\newblock All {R}amsey numbers for cycles in graphs.
\newblock \emph{Disc. Math.}, 8:\penalty0 313--329, 1974.

\bibitem[Fiz~Pontiveros et~al.(2014)Fiz~Pontiveros, Griffiths, Morris, Saxton,
  and Skokan]{fiz2014ramsey}
G.~Fiz~Pontiveros, S.~Griffiths, R.~Morris, D.~Saxton, and J.~Skokan.
\newblock The {R}amsey number of the clique and the hypercube.
\newblock \emph{J. London Math. Soc.}, 89:\penalty0 680--702, 2014.

\bibitem[Fox et~al.(2023)Fox, He, and Wigderson]{fox2021ramsey}
J.~Fox, X.~He, and Y.~Wigderson.
\newblock Ramsey goodness of books revisited.
\newblock \emph{Adv. Combin.}, 2023.

\bibitem[Gyárfás and Sárközy(2012)]{GS_2012}
A.~Gyárfás and G.N. Sárközy.
\newblock Star versus two stripes {R}amsey numbers and a conjecture of
  {S}chelp.
\newblock \emph{Combin., Probab. Computing}, 21:\penalty0 179–186, 2012.

\bibitem[Haslegrave et~al.(2023)Haslegrave, Hyde, Kim, and
  Liu]{haslegrave2023ramsey}
J.~Haslegrave, J.~Hyde, J.~Kim, and H.~Liu.
\newblock Ramsey numbers of cycles versus general graphs.
\newblock \emph{Forum Math., Sigma}, 11:\penalty0 e10, 2023.

\bibitem[Haxell(1995)]{haxell1995condition}
P.~Haxell.
\newblock A condition for matchability in hypergraphs.
\newblock \emph{Graphs Combin.}, 11:\penalty0 245--248, 1995.

\bibitem[Haxell(2001)]{haxell2001note}
P.~Haxell.
\newblock A note on vertex list colouring.
\newblock \emph{Combin., Probab. Computing}, 10:\penalty0 345--347, 2001.

\bibitem[Haxell(2011)]{haxell2011forming}
P.~Haxell.
\newblock On forming committees.
\newblock \emph{The American Mathematical Monthly}, 118:\penalty0 777--788,
  2011.

\bibitem[Li et~al.(2010)Li, Nikiforov, and Schelp]{Li2010}
H.~Li, V.~Nikiforov, and R.~Schelp.
\newblock A new class of {R}amsey--{T}ur{\'a}n problems.
\newblock \emph{Discrete mathematics}, 310:\penalty0 3579--3583, 2010.

\bibitem[{\L}uczak and Rahimi(2019)]{Luczak2019}
T.~{\L}uczak and Z.~Rahimi.
\newblock On {S}chelp's problem for three odd long cycles.
\newblock \emph{J. Combin. Theory, Ser. B}, 143:\penalty0 1--15, 2019.

\bibitem[{\L}uczak and Rahimi(2022)]{Luczak2020}
T.~{\L}uczak and Z.~Rahimi.
\newblock Long monochromatic even cycles in 3-edge-coloured graphs of large
  minimum degree.
\newblock \emph{J. Graph Theory}, 99:\penalty0 691--714, 2022.

\bibitem[Meshulam(2001)]{meshulam2001clique}
Roy Meshulam.
\newblock The clique complex and hypergraph matching.
\newblock \emph{Combinatorica}, 21\penalty0 (1):\penalty0 89--94, 2001.

\bibitem[Montgomery et~al.(2023)Montgomery, Pavez-Sign{\'e}, and
  Yan]{montgomery2023ramsey}
R.~Montgomery, M.~Pavez-Sign{\'e}, and J.~Yan.
\newblock Ramsey numbers of bounded degree trees versus general graphs.
\newblock \emph{arXiv:2310.20461}, 2023.

\bibitem[Moreira(2021)]{moreira2019ramsey}
Luiz Moreira.
\newblock Ramsey {G}oodness of {C}lique {V}ersus {P}aths in {R}andom {G}raphs.
\newblock \emph{SIAM J. Disc. Math.}, 35:\penalty0 2210--2222, 2021.

\bibitem[Nikiforov and Rousseau(2009)]{nikiforov2009ramsey}
V.~Nikiforov and C.C. Rousseau.
\newblock Ramsey goodness and beyond.
\newblock \emph{Combinatorica}, 29:\penalty0 227--262, 2009.

\bibitem[Nikiforov and Schelp(2008)]{Nikiforov2008}
V.~Nikiforov and R.~Schelp.
\newblock Cycles and {S}tability.
\newblock \emph{J. Combin. Theory, Ser. B}, 98:\penalty0 69--84, 2008.

\bibitem[Pokrovskiy and Sudakov(2017)]{pokrovskiy2017ramsey}
A.~Pokrovskiy and B.~Sudakov.
\newblock Ramsey goodness of paths.
\newblock \emph{J. Combin. Theory, Ser. B}, 122:\penalty0 384--390, 2017.

\bibitem[Pokrovskiy and Sudakov(2020)]{pokrovskiy2020ramsey}
A.~Pokrovskiy and B.~Sudakov.
\newblock Ramsey goodness of cycles.
\newblock \emph{SIAM J. Disc. Math.}, 34:\penalty0 1884--1908, 2020.

\bibitem[Schelp(2012)]{Schelp2012}
R.~Schelp.
\newblock Some {R}amsey–{T}urán type problems and related questions.
\newblock \emph{Disc. Math.}, 312:\penalty0 2158--2161, 2012.

\bibitem[Zhang et~al.(2023)Zhang, Peng, and Luo]{Zhang2023}
Y.~Zhang, Y.~Peng, and Z.~Luo.
\newblock Monochromatic cycles in 2-edge-colored bipartite graphs with large
  minimum degree.
\newblock \emph{arXiv:2304.08003}, 2023.

\end{thebibliography}

\end{document}